\theoremstyle{definition}
\newtheorem{dfn}{\textbf{Definition}}[section]
\newtheorem{ex}[dfn]{\textbf{Example}}
\newtheorem{prop}[dfn]{\textbf{Proposition}}
\newtheorem{thm}[dfn]{\textbf{Theorem}}
\renewcommand{\proofname}{\Proof}
\renewenvironment{proof}[1][\proofname]{\par
  \pushQED{\qed}%
  \normalfont \topsep6\p@\@plus6\p@\relax
  \trivlist
  \item\relax
  {
  #1\@addpunct{.}}\hspace\labelsep\ignorespaces
}{%
  \popQED\endtrivlist\@endpefalse
}
\renewcommand{\today}{\the\year/\the\month/\the\day}
\numberwithin{equation}{section}
\newcommand{\fS}{\mathfrak{S}}
\newcommand{\Z}{\mathbb{Z}}
\newcommand{\sgn}{\mathrm{sgn}}
\newcommand{\sign}{\mathrm{sign}}
\newcommand{\dokh}[4]{\epsilon(#1;{#2}, C_{#3}, C_{#4})}
\newcommand{\dokhplus}[4]{\epsilon(#1;{#2}, {#3}, {#4})}
\newcommand{\nikh}[3]{\delta(#1;C_{#2}, C_{#3})}
\newcommand{\domu}[3]{\mu(C_{#1}, C_{#2}, C_{#3})}
\newcommand{\mimu}[2]{\bar{\mu}_{#1}(#2)}
\newcommand{\vl}[2]{V(L_{#1}, L_{#2})}
\newcommand{\va}[1]{V_a(L_{#1})}
\newcommand{\vla}[2]{V_a(L_{#1}, L_{#2})}
\newcommand{\vlap}[2]{V_a(L_{#1}, L_{#2}, +)}
\newcommand{\vlam}[2]{V_a(L_{#1}, L_{#2}, -)}
\newcommand{\LN}[2]{\mathrm{Link}(L_{#1}, L_{#2})}
\newcommand{\eL}[3]{a_{L_{#1}}({#2}, {#3})}
\newcommand{\marubatu}[2]{\raisebox{-1.5pt}[0pt][0pt]{\scalebox{0.5}{#1}}
\raisebox{-1.0pt}[0pt][0pt]{\scalebox{0.04}{\includegraphics{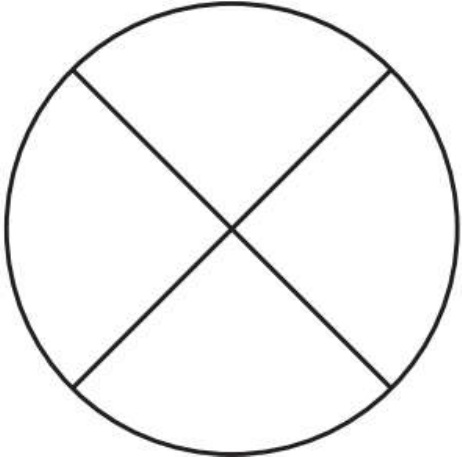}}}
\raisebox{-1.5pt}[0pt][0pt]{\scalebox{0.5}{#2}}\,}
\definecolor{Black}{cmyk}{0, 0, 0, 1}
\definecolor{Color1}{rgb}{0.9961,0.0000,0.0000}
\definecolor{Color2}{rgb}{0.0000,0.0000,0.9961}
\definecolor{Color3}{rgb}{0.0000,0.9961,0.0000}
\definecolor{Color4}{rgb}{0.9961,0.9961,0.0000}
\definecolor{Color5}{rgb}{0.9961,0.0000,0.9961}
\definecolor{Color6}{rgb}{0.0000,0.9961,0.9961}
\definecolor{White}{rgb}{1.0000,1.0000,1.0000}
\title{A representation of Milnor's triple linking number by chord diagrams and doodle invariants}
\author{RYOSUKE HIRATA}
\email{23ss108k@gmail.com}
\date{\today}
\begin{document}
  \begin{abstract}
    From \cite{Integral geometry of plane curves and knot invariants} it follows that degree two knot invariant admits a decomposition into the sum of a Gauss diagram count and a term involving Arnold invariants. 
    In this paper we establish an analogous description for Milnor's triple linking number — likewise of degree two — showing that it can be represented in terms of counts of certain chord diagrams together with doodle invariants. 
  \end{abstract}
  \maketitle
  \section{Introduction} 
  This paper discusses the relationship between doodle invariants and link invariants. 

  J. Milnor \cite{Isotopy of links} defined a series of link invariants known as the Milnor invariants. 
  The simplest invariant in this series is the linking number, and the second simplest is Milnor's triple linking number, which is a link homotopy invariant for three-component links. 

  This invariant is difficult to compute directly from its definition. 
  R. Fenn and P. Taylor \cite{Introducing doodles} showed that Milnor's triple linking number for special links, whose projection is a union of three simple closed curves with cyclic height (the Borromean ring is one such example), is equal to a doodle invariant, call the \emph{$\mu$-invariant}. 
  Here, a doodle refers to a union of generic closed curves on a plane and can be thought of as the projection of links onto the plane. 
  Two doodles are said to be equivalent if one can be transformed into the other by a finite sequence of permitted Reidemeister moves. 
  The invariant of the doodles we consider in this paper are those that remain unchanged under this equivalence. 

  The main result of this paper (Theorem \ref{syuteiri}) extends the result of \cite{Introducing doodles}. 
  Milnor's triple linking number for general three-component links is expressed as the sum of a doodle invariant and the sum of the signs of the intersections of chord diagrams, which are constructed through a certain procedure from the link. 
  This result is analogous to the work of X.-S. Lin and Z. Wang \cite{Integral geometry of plane curves and knot invariants}, that expresses the knot invariant of order two as the sum of an invariant of Gauss diagrams and the Arnold invariants for generic plane curves. 

  The proof of the main result is based on a result by B. Mellor and P. Melvin \cite{A geometric interpretation of Milnor’s triple linking numbers}. 
  They showed that Milnor's triple linking number can be expressed as the sum of the signs of the triple points of the Seifert surfaces bounded by each component of the link, as well as an integer derived from the intersection of the Seifert surfaces and the link. 
  In the proof, we demonstrate that by successfully constructing a Seifert surface from the chord diagrams we create, Milnor's triple linking number can be expressed solely in terms of the signed sum of the triple points of the Seifert surfaces. 
  Moreover, this sum can be computed by the invariants of the doodle and the chord diagrams. 

  \section{Doodles and the $\mu$-invariant}
\subsection{Doodles}
\begin{dfn}\label{n-component doodle}[cf.\ \cite{Introducing doodles}, \cite{On the classification of ornaments}] 
  An \emph{n-component doodle} is an immersion $\coprod_{i=1}^n S_i^1 \to \mathbb{R}^2$ that has only finitely many transversal double points as its singularities, where $S_i^1$ ($i=1, \dots, n$) denotes a copy of $S^1$. 
  We denote a doodle by $D = (C_1, \dots, C_n)$, where each $C_i$ is the image of $S_i^1$. We call each $C_i$ a \emph{component} of the doodle. 
  When the number of components is not important, we simply refer to them as doodles. 
\end{dfn}
\begin{ex} 
  Figure \ref{figure_doodle_ex.tex} shows examples of doodles. 
  In Figure \ref{figure_doodle_ex.tex} (a) and (b), curves with different colors represent different components. 
    \begin{figure}[htbp]
      \centering
      \includegraphics[height=29mm]{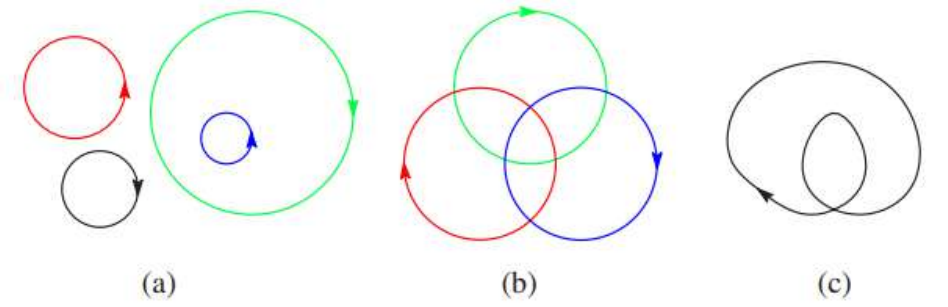}
      \caption{Examples of doodles}
      \label{figure_doodle_ex.tex}
    \end{figure}
\end{ex}
\begin{dfn}\label{doodleequivalent}[cf.\ \cite{Introducing doodles}, \cite{On the classification of ornaments}] 
  Two doodles are said to be \emph{equivalent} if and only if they can be transformed into each other by a finite sequence of ambient isotopies of $S^2$, RI (Reidemeister move I), RII (Reidemeister move II), and a limited form of RIII (Reidemeister move III) (see Figures \ref{figure_Reidemeistermove.tex} and \ref{figure_forbidden_move.tex}).
  \begin{figure}[htbp] 
    \centering
    \includegraphics[width=130mm]{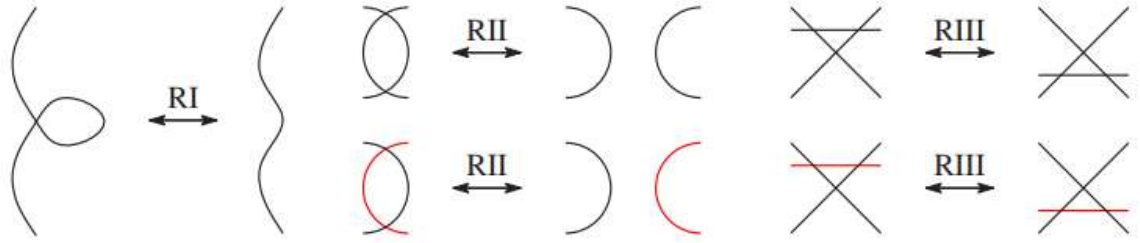}
    \caption{Permitted moves (each component may be arbitrarily oriented)} 
    \label{figure_Reidemeistermove.tex} 
  \end{figure} 
  \begin{figure}[htbp] 
    \centering 
    \includegraphics[width=60mm]{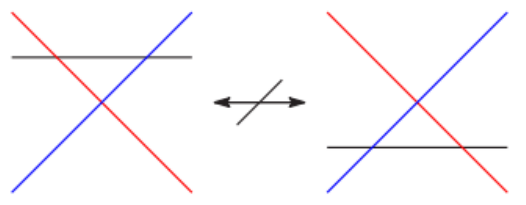} 
    \caption{Forbidden move} 
    \label{figure_forbidden_move.tex} 
  \end{figure} 
\end{dfn}

  \subsection{The $\mu$-invariant} 
  The $\mu$-invariant is defined in \cite{Introducing doodles} for three-component doodles, where all the components are simple closed curves with an anticlockwise orientation. 
  This definition can be extended to three-component doodles in the sense of Definition \ref{n-component doodle} as follows. 

  By smoothing all self-intersections of $C_1$, we obtain a doodle $(C_{1, 1}, \dots, C_{1, a})$, where all components are oriented simple closed curves (see Figure \ref{figure_smoothing.tex}). 
  \begin{figure}[htbp] 
    \centering 
    \includegraphics[width=120mm]{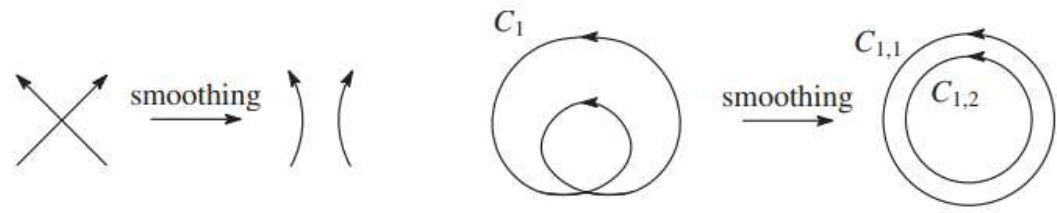} 
    \caption{Smoothing} 
    \label{figure_smoothing.tex} 
  \end{figure}

  For each intersection $p$ of $C_2$ and $C_3$, consider the sign $\nikh{p}{2}{3}$. 
  This sign is defined such that $\nikh{p}{2}{3}=+1$ when $C_3$ intersects from right to left,  and $\nikh{p}{2}{3}=-1$ when it intersects from left to right, with respect to the direction of travel of $C_2$. 
  For each intersection $p$ of $C_2$ and $C_3$ in the bounded region whose boundary is $C_{1, i}$, we define $\dokh{p}{C_{1, i}}{2}{3}$ as 
  \begin{equation} 
    \dokh{p}{C_{1, i}}{2}{3}=(-1)^{n_{C_{1, i}}}\nikh{p}{2}{3}, 
  \end{equation} 
  where $n_{C_{1, i}}$ is 0 if $C_{1, i}$ is oriented anticlockwise, and 1 if $C_{1, i}$ is oriented clockwise. 

\begin{dfn}\label{mu-invariantdef} 
  Let $\domu{1, i}{2}{3}$ denote the sum of $\dokh{p}{C_{1, i}}{2}{3}$ for all intersections of $C_2$ and $C_3$ inside $C_{1, i}$.
  The \emph{$\mu$-invariant} of the doodle $D$ is then defined as 
    \begin{equation} 
      \domu{1}{2}{3}:=\sum_{i=1}^a \domu{1, i}{2}{3}. 
    \end{equation} 
\end{dfn}
\begin{ex} 
  Figure \ref{figure_exdoodlemuinv.tex} shows examples of calculation of the $\mu$-invariant.
  \begin{figure}[htbp] 
    \centering 
    \includegraphics[width=95mm]{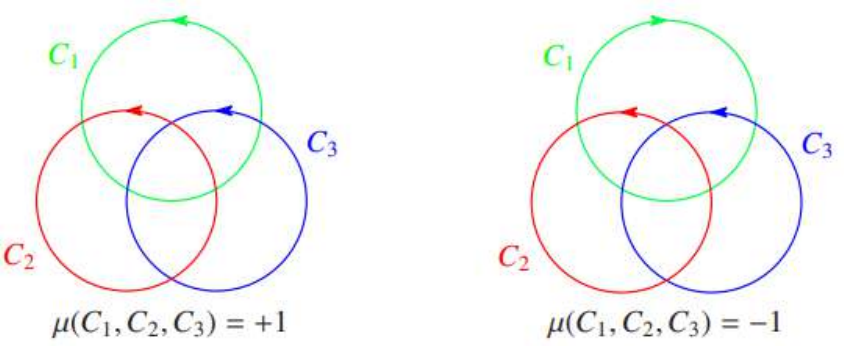} 
    \caption{Examples of $\mu$-invariants} 
    \label{figure_exdoodlemuinv.tex} 
  \end{figure} 
\end{ex}
\begin{prop}\label{mu-invariantprop} 
  \begin{enumerate}[(1)] 
    \item 
      $\domu{1}{2}{3}\in\Z$. 
    \item 
      Let $-C_i$ denote the curve $C_i$ with its orientation reversed. 
      Then, we have 
        \begin{equation}\label{mucomponentmukihantai} 
          \mu((-1)^{s_1}C_1, (-1)^{s_2}C_2, (-1)^{s_3}C_3)=(-1)^s\domu{1}{2}{3}\quad (s_1, s_2, s_3\in{0, 1}, s=s_1+s_2+s_3). 
        \end{equation}
    \item 
      Let $\fS_3$ denote the symmetric group on three elements. 
      Then, we have 
      \begin{equation}\label{mucomponentirekae} 
        \domu{\sigma(1)}{\sigma(2)}{\sigma(3)}=\sgn(\sigma)\domu{1}{2}{3}\quad (\sigma\in\fS_3). 
      \end{equation} 
  \end{enumerate} 
\end{prop}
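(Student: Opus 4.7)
Parts (1) and (2) should follow by direct inspection of Definition~\ref{mu-invariantdef}. For (1), every summand $\dokh{p}{C_{1,i}}{2}{3}$ equals $\pm 1$ and there are only finitely many intersections, so the sum is an integer. For (2), reversing $C_1$ reverses the induced orientation of every smoothed component $C_{1,i}$ and therefore flips each $(-1)^{n_{C_{1,i}}}$; reversing $C_2$ or $C_3$ reverses the ordered tangent pair at each intersection and therefore flips $\nikh{p}{2}{3}$. Each of the three elementary reversals multiplies $\mu$ by $-1$, producing the factor $(-1)^{s_1+s_2+s_3}$.

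For (3) my plan is first to reformulate $\mu$ in terms of winding numbers. Because the oriented smoothing of $C_1$ modifies $C_1$ only inside small disks around its self-intersections, the winding number about any point $p \notin C_1$ is preserved, so $w(C_1,p) = \sum_{i=1}^{a} w(C_{1,i},p)$. Each $C_{1,i}$ is a simple closed curve, so $w(C_{1,i},p)$ equals $(-1)^{n_{C_{1,i}}}$ when $p$ lies inside $C_{1,i}$ and $0$ otherwise. Substituting into Definition~\ref{mu-invariantdef} yields
\begin{equation*}
  \domu{1}{2}{3} = \sum_{p \in C_2 \cap C_3} w(C_1,p)\,\nikh{p}{2}{3}.
\end{equation*}
In this form, antisymmetry under the transposition $(2\,3)$ is immediate from $\nikh{p}{3}{2} = -\nikh{p}{2}{3}$.

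The main obstacle will be antisymmetry under the transposition $(1\,2)$, for which I plan a telescoping argument along $C_3$. Parametrize $C_3$ by $S^1$ and consider the integer-valued step function $h(t) := w(C_1, C_3(t))\,w(C_2, C_3(t))$, defined whenever $C_3(t) \notin C_1 \cup C_2$. Generically $C_1 \cap C_2 \cap C_3 = \emptyset$, and at a transverse crossing $C_3(t)=p \in C_1$ the factor $w(C_1, C_3(\cdot))$ jumps by $\nikh{p}{1}{3}$ while the other factor is continuous, with the symmetric statement at crossings with $C_2$. Because $h$ is periodic on $C_3$, the total of its jumps vanishes, giving
\begin{equation*}
  \sum_{p \in C_1 \cap C_3} w(C_2,p)\,\nikh{p}{1}{3} + \sum_{q \in C_2 \cap C_3} w(C_1,q)\,\nikh{q}{2}{3} = 0,
\end{equation*}
which, via the reformulation above, is exactly $\domu{2}{1}{3} = -\domu{1}{2}{3}$. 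Since the transpositions $(1\,2)$ and $(2\,3)$ generate $\fS_3$, the full identity (\ref{mucomponentirekae}) for arbitrary $\sigma$ follows.
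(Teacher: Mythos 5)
The paper states Proposition~\ref{mu-invariantprop} without any proof, so there is no in-text argument to compare yours against; judged on its own, your plan is correct and essentially complete. Parts (1) and (2) are exactly the direct inspections one would expect (reversing $C_1$ reverses every Seifert circle $C_{1,i}$ of the oriented smoothing and hence flips each $(-1)^{n_{C_{1,i}}}$; reversing $C_2$ or $C_3$ swaps ``right to left'' with ``left to right'' in the definition of $\nikh{p}{2}{3}$). For part (3), your winding-number reformulation $\domu{1}{2}{3}=\sum_{p\in C_2\cap C_3}w(C_1,p)\,\nikh{p}{2}{3}$ is valid: the oriented smoothing alters $C_1$ only inside small disks about its double points, so $C_1$ and $\sum_i C_{1,i}$ are homologous in the complement of $p$, and for a simple closed curve the winding number is $(-1)^{n_{C_{1,i}}}$ inside and $0$ outside, which is precisely the weight appearing in Definition~\ref{mu-invariantdef}. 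The $(2\,3)$-antisymmetry then follows since $\nikh{p}{2}{3}$ is the sign of $\det(\dot C_2,\dot C_3)$, and your telescoping of $h(t)=w(C_1,C_3(t))\,w(C_2,C_3(t))$ along $C_3$ gives the $(1\,2)$-antisymmetry, using that the jump of $w(C_1,\cdot)$ across $C_1$ equals $\nikh{p}{1}{3}$ under the paper's right-to-left convention and that genericity of the doodle (only transverse double points, no triple points) keeps the other factor locally constant at each jump. Two small things to make explicit in a write-up: (i) the two transposition identities hold for an \emph{arbitrary} ordered triple of curves, which is what allows you to compose them and land on $\sgn(\sigma)$ for every $\sigma\in\fS_3$; (ii) the notion of ``bounded region'' (hence the winding number) presupposes a choice of point at infinity on $S^2$, so your reformulation depends on that choice exactly as the original definition does.
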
 
\begin{prop} 
  The $\mu$-invariant is invariant under the transformations shown in Figure \ref{figure_Reidemeistermove.tex}.
\end{prop}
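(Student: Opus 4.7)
The plan is to check invariance of $\mu$ under each of the three permitted moves — RI, RII, and the restricted form of RIII — by a case analysis on which of the three components $C_1, C_2, C_3$ are involved in the local move. Outside the small disk $D\subset\R^2$ where the move takes place, the doodle is unchanged, so the only quantities that can shift are the set of intersections of $C_2$ with $C_3$ lying inside $D$ and the local structure of the smoothed components $C_{1,i}$ meeting $D$.

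A preliminary step that streamlines the argument is to rewrite the definition. For any point $p\in\R^2\setminus C_1$, induction on the number of self-crossings of $C_1$ (using that an oriented smoothing does not change winding numbers) gives
\begin{equation*}
\sum_{i:\, p\in\mathrm{int}(C_{1,i})}(-1)^{n_{C_{1, i}}} = w(C_1, p),
\end{equation*}
where $w(C_1, p)$ denotes the winding number of the oriented curve $C_1$ around $p$. Hence
\begin{equation*}
\domu{1}{2}{3} = \sum_{p\in C_2\cap C_3} \nikh{p}{2}{3}\cdot w(C_1, p).
\end{equation*}
This reformulation reduces the problem to tracking how $C_2\cap C_3$ and the winding numbers $w(C_1, p)$ change under each move.

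For RI, a kink on $C_2$ or $C_3$ leaves both $C_2\cap C_3$ and $C_1$ unchanged, so $\mu$ is trivially invariant; a kink on $C_1$ adds a small loop bounding a disk inside $D$ that contains no $C_2$--$C_3$ intersection, so $w(C_1, p)$ is unaltered for every $p\in C_2\cap C_3$. For RII, if the two arcs belong to $C_2$ and $C_3$, two new intersection points $p, p'$ appear with opposite signs $\nikh{p}{2}{3} = -\nikh{p'}{2}{3}$ at arbitrarily close locations, so $w(C_1, p) = w(C_1, p')$ and the two contributions cancel; if instead $C_1$ is one of the participating strands, $C_2\cap C_3$ is unchanged and the ambient isotopy class of $C_1$ in $\R^2\setminus\{p\}$ is preserved for every $p\in C_2\cap C_3$, so the winding numbers are unchanged.

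The main obstacle is the RIII case, which I would decompose by how many of the three strands in the triangle come from each component. Whenever the triangle contains no strand of $C_1$, or whenever $C_2$ and $C_3$ are not both present, the move is realized by an ambient isotopy that preserves both $C_2\cap C_3$ and every $w(C_1,p)$. The delicate subcase is when the three strands come one each from $C_1, C_2, C_3$: there an intersection point $p\in C_2\cap C_3$ is moved across the triangle and \emph{a priori} its position relative to the $C_1$ strand could change, shifting $w(C_1,p)$ by $\pm 1$. The core of the proof is to inspect the oriented local pictures permitted by Figure \ref{figure_Reidemeistermove.tex} and verify that in each such configuration the motion of $p$ is realized by an isotopy of $D$ fixing $C_1$ setwise, so $w(C_1, p)$ is preserved; the move of Figure \ref{figure_forbidden_move.tex} is precisely the one in which this fails, which is why it must be excluded. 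This oriented-triangle inspection is the most tedious but entirely routine part of the argument.
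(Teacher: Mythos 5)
The paper states this proposition without a proof, so there is nothing of its own to compare against; I am judging your argument on its merits. Your reformulation $\domu{1}{2}{3}=\sum_{p\in C_2\cap C_3}\nikh{p}{2}{3}\cdot w(C_1,p)$ is correct — the smoothings are supported in small disks missing each $p$, so they do not change the class of $C_1$ in $H_1(\R^2\setminus\{p\})$, and each anticlockwise (resp.\ clockwise) $C_{1,i}$ contributes $+1$ (resp.\ $-1$) to the winding number exactly when $p$ lies in its interior. This is a genuinely cleaner route than chasing the smoothed components through each move. Your RI and RII cases are handled correctly (the omitted self-tangency subcases of RII, where both arcs lie on the same component, are trivial since they create no $C_2\cap C_3$ points).

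The one place you stop short is the RIII case, and there the situation resolves more sharply than your final paragraph suggests. In any RIII move, the crossing formed by two of the strands passes across the third strand — that is the combinatorial content of the move. So if the three strands come one each from $C_1$, $C_2$, $C_3$, the point $p\in C_2\cap C_3$ necessarily changes sides of the $C_1$ strand: $w(C_1,p)$ changes by exactly $\pm1$ while $\nikh{p}{2}{3}$ is unchanged, and $\domu{1}{2}{3}$ jumps by $\pm1$ \emph{regardless of orientations}. Consequently no RIII move involving all three components can appear among the permitted moves of Figure~\ref{figure_Reidemeistermove.tex}; the permitted RIII moves are precisely those in which at least two of the three strands belong to the same component, which are exactly the cases you already disposed of, and the three-distinct-components move is exactly the forbidden move of Figure~\ref{figure_forbidden_move.tex}. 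There is therefore no ``oriented-triangle inspection'' left to do — the verification you defer is vacuous, and the isotopy-fixing-$C_1$ scenario you describe for a permitted three-component triangle cannot occur. Once you replace that deferred step with this observation, the proof is complete; as a bonus, the $\pm1$ jump you compute in the three-component case is precisely a proof of Proposition~\ref{muinvariant1henka}.
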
 
\begin{prop}\label{muinvariant1henka} 
  The $\mu$-invariant changes by $\pm1$ before and after the RIII move in Figure \ref{figure_forbidden_move.tex}. 
\end{prop}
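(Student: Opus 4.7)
The plan is to localize the computation to a small disk $D$ in which the forbidden RIII move takes place. Inside $D$ three strands $s_1, s_2, s_3$ meet pairwise in crossings $p_{12}, p_{13}, p_{23}$ forming a triangle, and the move slides one of them --- say $s_1$ --- across the opposite crossing $p_{23}=s_2\cap s_3$, leaving $s_2$ and $s_3$ (and therefore the point $p_{23}$) fixed. Assuming that the three strands belong to the three distinct components $C_1, C_2, C_3$ as in Figure \ref{figure_forbidden_move.tex}, the crossing $p_{23}$ is the unique $C_2$-$C_3$ intersection inside $D$, while $p_{12}$ and $p_{13}$ are of type $(C_1,C_2)$ and $(C_1,C_3)$ respectively and hence do not enter the sum in Definition \ref{mu-invariantdef}.

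First, I would verify that the sign $\nikh{p_{23}}{2}{3}$ is unaffected by the move: it is determined entirely by the local orientations of $s_2$ and $s_3$, which the move does not alter. Next, I would examine the smoothed components $(C_{1,1}, \dots, C_{1,a})$ of $C_1$. Since $s_1$ has no self-intersection inside $D$, the smoothing does nothing locally, so $s_1$ sits as a sub-arc of a single simple closed curve $C_{1,j}$. The effect of the move is to replace this sub-arc by its translate on the other side of $p_{23}$, while leaving every $C_{1,i}$ with $i\neq j$ untouched and preserving the orientation, hence the parity $n_{C_{1,j}}$, of $C_{1,j}$.

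The conclusion I would then draw is that during the move $s_1$ sweeps across $p_{23}$ exactly once, so $p_{23}$ crosses the boundary $C_{1,j}$ once and flips its inside/outside status with respect to $C_{1,j}$, while its status with respect to every other $C_{1,i}$ is unchanged. By Definition \ref{mu-invariantdef}, the net change in $\domu{1}{2}{3}$ equals the single altered term $\pm(-1)^{n_{C_{1,j}}}\nikh{p_{23}}{2}{3}=\pm 1$.

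The main obstacle I anticipate is the bookkeeping around the smoothed curves: checking that displacing the arc $s_1$ really amounts to a clean sub-arc replacement of a single $C_{1,j}$, rather than globally re-threading the smoothed picture. This should follow because all self-intersections of $C_1$ --- and hence all local smoothings --- occur outside $D$, but it deserves to be spelled out with care, perhaps by first verifying invariance of the $\mu$-invariant under isotopies of $s_1$ that do not cross $p_{23}$, and then treating the single transverse passage across $p_{23}$ as the sole source of the $\pm 1$ jump. Once this is settled, the sign computation is routine.
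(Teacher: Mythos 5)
The paper states this proposition without proof, so there is no argument of the author's to compare against; your proof is correct and is the natural one. Localizing to the disk supporting the move, noting that of the three crossings only the $C_2$--$C_3$ point $p_{23}$ enters $\domu{1}{2}{3}$, that $\nikh{p_{23}}{2}{3}$ and the parity $n_{C_{1,j}}$ are unaffected, and that the moving arc of $C_1$ lies on a single smoothed component $C_{1,j}$ (all smoothings occur outside the disk) whose inside/outside relation to $p_{23}$ is toggled exactly once, yields precisely the jump $\pm(-1)^{n_{C_{1,j}}}\nikh{p_{23}}{2}{3}=\pm1$.
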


  \section{Milnor's triple linking number} 
  The Milnor's triple linking number $\mimu{L}{ijk}$ of three-component links is introduced in \cite{Isotopy of links}. 
  \cite{A geometric interpretation of Milnor’s triple linking numbers} provides a formula for the triple linking number using two integers, $m_{ijk}$ and $t_{ijk}$, defined as follows. 

\begin{dfn}[{\cite[Definition 5.1]{Surface Systems and Triple Linking Numbers}}] 
  Let $S$ be a finite set. 
  Let $w = s_1^{\epsilon_1}\ldots s_m^{\epsilon_m}$ be a word in the letters $s_i^{\epsilon_i} \in S \times \{\pm 1\}$, and let $r, s \in S$. 
  The $rs$-\emph{decomposition} $(i, j)$ of $w$ is a pair of indices $i < j$ such that $s_i = r^{\pm 1}$ and $s_j = s^{\pm 1}$. 
  The set of $rs$-decompositions is written as $D_{rs}(w)$. 
  The \emph{sign} of a decomposition is $\sign_w(i, j) = \epsilon_i \cdot \epsilon_j \in {\pm 1}$. 
  The \emph{signed occurrence} $e_{rs}$ of the pair $(r, s)$ is the integer 
    \begin{equation} 
      e_{rs}(w) = \sum_{(i, j) \in D_{rs}(w)} \sign_w(i, j). 
    \end{equation} 
\end{dfn}
  Let $L$ be a link, and attach Seifert surfaces $F_i$ to the $i$-th component $L_i$ of the link $L$. 
  These surfaces are oriented according to the outward-normal-first convention. 
  Choose a base point on the component $L_1$. 
  Define a word $w_1$ on the set $S=\{2, 3\}$ as follows. 
  Starting from the base point, go around $L_1$ in the direction of $L_1$, and place $2^\epsilon$ (resp. $3^\epsilon$) each time we intersect $F_2$ (resp. $F_3$). 
  Here, $\epsilon$ is $+1$ if the orientations of the surface and $L_1$ are coincident at the intersection, and $-1$ otherwise. 
  Read the words $w_2$ and $w_3$ in the same way. 
\begin{dfn}
  Define $m_{123}(F)$ as 
    \begin{equation} 
      m_{123}(F)=e_{23}(w_1)+e_{31}(w_2)+e_{12}(w_3). 
    \end{equation}
\end{dfn}

\begin{dfn}\label{triplepointsign} 
  Let $n_i$ be the normal vector of the surface $F_i$, and let $F = F_i \cup F_j \cup F_k$. 
  Define $t_{ijk}(F)$ as 
  \begin{equation} 
    \begin{array}{ll} 
      t_{ijk}(F)\coloneqq\sum_{p \in F_i \cap F_j \cap F_k} \tau_{ijk}(p), \quad 
      &\tau_{ijk}(p)\coloneqq +1, \text{if $(n_i, n_j, n_k)$ is right-handed}, \\
      &\tau_{ijk}(p)\coloneqq -1, \text{if $(n_i, n_j, n_k)$ is left-handed}. 
    \end{array} 
  \end{equation} 
\end{dfn} 

  Define 
    \begin{equation} 
      \Delta_L(ijk):=\gcd\{\LN{i}{j}, \LN{j}{k}, \LN{k}{i}\}. 
    \end{equation} 
  By definition, 
    \begin{equation} 
      \gcd\{0, 0, 0\}=0. 
    \end{equation}

\begin{thm}[cf.\ {\cite[Theorem 5.14]{Surface Systems and Triple Linking Numbers}}, {\cite[Theorem(1)]{A geometric interpretation of Milnor’s triple linking numbers}}]\label{mimumtthm} 
  For any choice of Seifert surfaces $F$, the Milnor's triple linking number satisfies the congruence 
  \begin{equation}\label{Milnorkousiki} 
    \mimu{L}{ijk} \equiv m_{ijk}(F) - t_{ijk}(F) \mod \Delta_L(ijk). 
  \end{equation} 
\end{thm}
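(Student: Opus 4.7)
The plan is to derive the congruence along the lines of Cochran's Seifert surface interpretation of $\mimu{L}{ijk}$, which underlies both cited sources. The starting point is that Milnor's triple linking number is, by its original definition, extracted from the Magnus expansion of the longitudes of $L$ in the meridian generators of the link group, taken modulo an indeterminacy that is exactly $\Delta_L(ijk)$ because it is generated by the pairwise linking numbers.

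First, I would fix a base point on $L_i$ and read off a lift $W_i$ of the longitude $\lambda_i$ directly from its intersections with $F_j\cup F_k$: each time $L_i$ punctures $F_j$ (resp.\ $F_k$) positively or negatively, record a letter $x_j^{\pm 1}$ (resp.\ $x_k^{\pm 1}$), where $x_j,x_k$ are meridians of $L_j,L_k$. The word $w_i$ is then the image of $W_i$ in the free monoid on $\{j^{\pm1},k^{\pm1}\}$. If one could choose $F_j$ and $F_k$ disjoint, then $\lambda_i=W_i$ in $\pi_1(\R^3\setminus(L_j\cup L_k))$, and the coefficient of the monomial $X_jX_k$ in its Magnus expansion would be exactly $e_{jk}(w_i)$. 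Summing the three cyclic contributions yields $m_{ijk}(F)$, which would already coincide with $\mimu{L}{ijk}$ modulo $\Delta_L(ijk)$.

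The main obstacle is that generically the three Seifert surfaces intersect in curves and in isolated triple points, so the reading above is not an equality of group elements. The technical heart of the proof is a local calculation: at each triple point $p\in F_i\cap F_j\cap F_k$, resolving the configuration so that the surfaces become locally disjoint modifies the longitude word by a commutator $[x_j^{\pm},x_k^{\pm}]$ whose contribution to the Magnus coefficient of $X_jX_k$ is precisely $\tau_{ijk}(p)$. Summing these local corrections over all triple points and all three components produces the correction term $-t_{ijk}(F)$. One must also verify that double-curve intersections $F_j\cap F_k$, $F_i\cap F_j$, $F_i\cap F_k$ themselves contribute nothing to the degree-two Magnus coefficient, so that only triple points matter; this uses the fact that a longitudinal pushoff across a double curve merely conjugates the word by a meridian, which does not affect the coefficient of $X_jX_k$ modulo $\Delta_L(ijk)$.

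Finally, I would verify well-definedness modulo $\Delta_L(ijk)$: changing the base point on $L_i$ cyclically permutes $w_i$ and shifts $e_{jk}(w_i)$ by an integer multiple of $\LN{i}{j}$ or $\LN{i}{k}$, while modifying some $F_\ell$ by an embedded closed surface changes $m_{ijk}-t_{ijk}$ by a multiple of the remaining pairwise linking number. Hence $m_{ijk}(F)-t_{ijk}(F)$ descends to a well-defined class in $\Z/\Delta_L(ijk)\Z$ that, by the Magnus-expansion identification above, coincides with $\mimu{L}{ijk}$.
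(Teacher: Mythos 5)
This theorem is not proved in the paper: it is imported verbatim from the literature (Mellor--Melvin and Davis--Nagel--Orson--Powell are cited with ``cf.''), so there is no in-paper argument to compare yours against. Your sketch does follow the standard route of those sources --- read the longitude $\lambda_i$ as a word in meridians from its punctures of $F_j\cup F_k$, identify $e_{jk}(w_i)$ with the degree-two Magnus coefficient, and account for the failure of the surfaces to be disjoint by local corrections at triple points --- and that strategy is sound.

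As a proof, however, the two load-bearing steps are asserted rather than carried out. First, what you actually read off from the punctures is a product of \emph{conjugates} of meridians, not of the meridians $x_j^{\pm1}, x_k^{\pm1}$ themselves; showing that the conjugating elements perturb the coefficient of $X_jX_k$ only by integer combinations of $\LN{i}{j}$, $\LN{j}{k}$, $\LN{k}{i}$ is exactly where the indeterminacy $\Delta_L(ijk)$ enters, and it needs an explicit computation with the Magnus expansion (the degree-one coefficients of the conjugators are linking numbers). Second, the local claim that resolving a triple point $p$ inserts a commutator contributing $\tau_{ijk}(p)$ to the $X_jX_k$ coefficient is the geometric heart of the theorem and is stated without the model computation that justifies it. Finally, your well-definedness argument is too quick: two Seifert surfaces for the same component are not in general related by adding an embedded closed surface; one must either argue via the homology class of their difference in the link exterior or track $m_{ijk}(F)-t_{ijk}(F)$ through a generating set of surface modifications. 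None of these gaps indicates a wrong approach --- they are precisely the points the cited papers spend their effort on --- but until they are filled in, what you have is a correct outline rather than a proof.
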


  \section{The main theorem}
\begin{dfn}
  A \emph{generalized chord diagram} is a chord diagram on a circle (called the \emph{skeleton}) that may have a vertex not connected by a chord to any other vertex, or a vertex with two or more chords emanating from it. 
  Such vertices are called \emph{excessive}. 
\end{dfn} 
  Let $L=L_1\cup L_2\cup L_3$ be a three-component link. 
  The procedure for constructing generalized chord diagrams from $L$ is as follows: choose a diagram of $L$ whose projection onto a plane is a doodle. 
  We now examine specific examples based on the Link $L=(L_1, L_2, L_3)$ shown in Figure \ref{figure_gutairei_LDLLDL.tex}.  
  \begin{figure}[htbp]
      \centering
      \includegraphics[width=170mm]{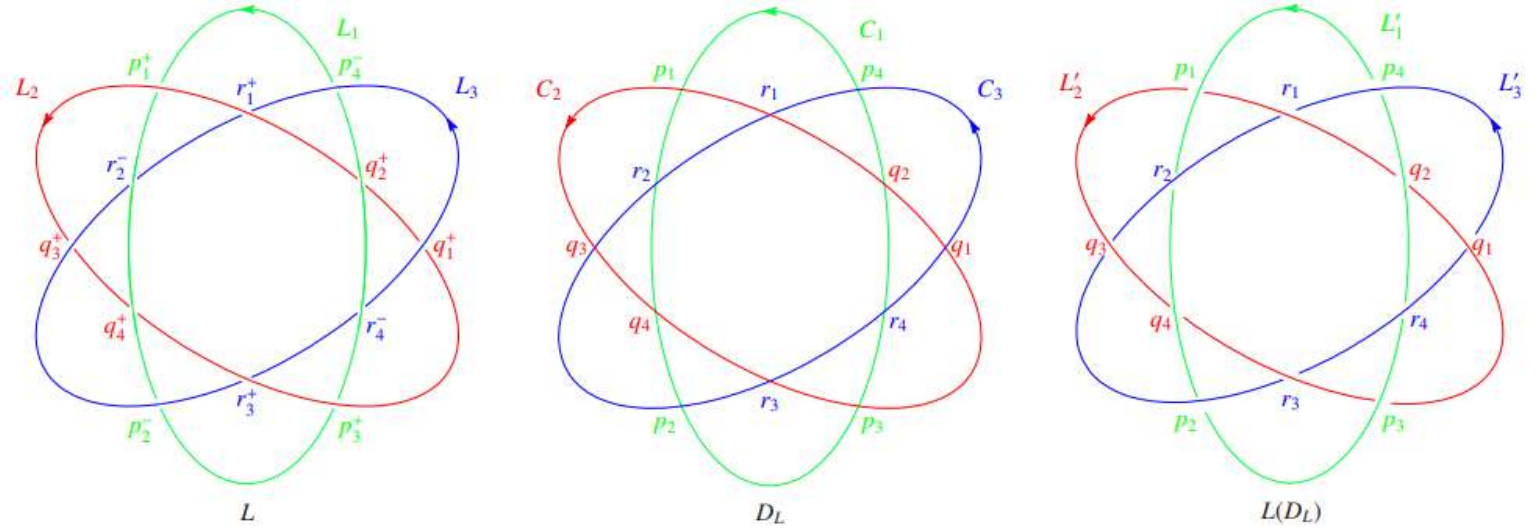}
      \caption{Link $L$, doodle $D_L$, and the link $L(D)$ constructed from $D_L$}
      \label{figure_gutairei_LDLLDL.tex}
  \end{figure}
  Next, consider a doodle $D_L=(C_1, C_2, C_3)$ obtained by projecting $L$ onto a plane.
  In \cite{Introducing doodles}, a 3-component link $L(D)=(L_1, L_2, L_3)$ is constructed from a 3-component doodle $D=(C_1, C_2, C_3)$ without self-intersections by specifying over/under arc information, such that $C_1$ is positioned over $C_2$, $C_2$ is positioned over $C_3$, and $C_3$ is positioned over $C_1$ at each intersection. 
  The construction can be easily extended to any doodle with self-intersections, yielding a link homotopy type $L(D)$. 
  The resulting $L(D)$ has the property that any two components are unlinked. 
  In particular, $\LN{i}{j}=0$. 
  The link $L(D_L)=(L^\prime_1, L^\prime_2, L^\prime_3)$ corresponding to this projection. 
  In Figure \ref{figure_gutairei_LDLLDL.tex}, $D_L$ and $L(D_L)$ are made of link $L$. 

  Let $\va{1}$ be the set of all crossings involving $L_1$ and another component of $L$ where the branch of $L_1$ is over-crossing. 
  Let $\gamma(p)$ be the sign of $p\in\va{1}$. 
  Let $\vl{i}{j}$ be the set of all intersections involving $L_i$ and $L_j$, let $\vla{i}{j}$ be the set of intersections where $L_i$ is the upper intersection. 
  Then we have
  \begin{equation}
    \va{1}=\vla{1}{2}\sqcup\vla{1}{3}. 
  \end{equation}
  For example, $\va{1}$ for $L$ in Figure \ref{figure_gutairei_LDLLDL.tex} can be divided as follows: 
  \begin{equation}
    \va{1}=\{q^+_2, r^-_2, q^+_4, r^-_4\}=\{q^+_2, q^+_4\}\sqcup\{r^-_2, r^-_4\}=\vla{1}{2}\sqcup\vla{1}{3}. 
  \end{equation}
  
  We take two points $b_{nc_2}$ and $b_{c_3}$ on $L_1$ so that no elements of $\va{1}$ are on the subarc of $L_1$ whose starting point is $b_{nc_2}$ and endpoint is $b_{c_3}$. 
  The next step is to construct a generalized chord diagram $G_{L_1}$ from $\va{1}$, $b_{c_3}$, and $b_{nc_2}$. 
  \begin{enumerate}
    \item
      Let $S_1$ represent the skeleton of $G_{L_1}$, which is oriented in correspondence with $L_1$. 
      On $S_1$, the elements of $\va{1}$, along with $b_{c_3}$ and $b_{nc_2}$, are placed in the corresponding order. 
    \item
      Connect each element $p$ of $\vla{1}{3}$ with $b_{c_3}$ by a chord. 
      These chords should be oriented so that its starting point (resp. endpoint) is $p$ if $p\in\vlap{1}{3}$ (resp. $p\in\vlam{1}{3}$). 
      Let $T_{1, 3}$ denote the set of all these chords. 
      Assume that no element of $T_{1, 3}$ intersects with another element of $T_{1, 3}$ along the way. 
    \item 
      Similarly, connect the element $\vla{1}{2}$ and $b_{nc_2}$ with an oriented chord. 
      Let $T_{1, 2}$ be the set of these chords. 
  \end{enumerate}

  We construct $G_{L_2}$ and $G_{L_3}$ in the same way as above. 
  We have $\va{k}=\vla{k}{j}\sqcup\vla{k}{i}$ for any cyclic permutation $(i, j, k)$ of $(1, 2, 3)$. 

\begin{dfn}
  Let $\left\langle\marubatu{j}{i}, G_{L_k}\right\rangle$ denote the sum of $\dokhplus{p}{S_k}{T_{k, j}}{T_{k, i}}$ for all intersections $p$ between $T_{k, j}$ and $T_{k, i}$. 
\end{dfn}

\begin{thm}\label{syuteiri}
  Let $L=L_1\cup L_2 \cup L_3$ be an oriented three-component link.
  Let $D_L=(C_1, C_2, C_3)$ be the three-component doodle obtained by projecting $L$ onto a plane. 
  Also, let $\mathfrak{A}_3$ be the alternating group on three elements. 
  Then, we have the following equation: 
  \begin{equation}\label{syuteirinokousiki}
    \mimu{L}{123}\equiv-\domu{1}{2}{3}-\sum_{(i, j, k)\in \mathfrak{A}_3}\left\langle\marubatu{j}{i}, G_{L_k}\right\rangle\mod\Delta_L(123). 
  \end{equation}
\end{thm}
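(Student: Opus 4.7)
The plan is to apply Theorem \ref{mimumtthm} to a carefully chosen triple of Seifert surfaces $F = F_1 \cup F_2 \cup F_3$ built from the data of the generalized chord diagrams $G_{L_1}, G_{L_2}, G_{L_3}$, to arrange the construction so that the word-count $m_{123}(F)$ vanishes, and then to identify the triple-point count $t_{123}(F)$ with minus the right-hand side of \eqref{syuteirinokousiki}. The congruence \eqref{syuteirinokousiki} then follows immediately from \eqref{Milnorkousiki}.

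For the construction I would first take $F_k^0$ to be the surface in the plane consisting of the bounded regions of the smoothed simple closed curves $C_{k, 1}, \dots, C_{k, a_k}$, oriented by the convention used in Section 2 so that each region acquires the sign $(-1)^{n_{C_{k, i}}}$. At each over-crossing $p \in \va{k}$ the planar surface $F_k^0$ fails to bound $L_k$, and I repair this by attaching a thin embedded ribbon that follows the associated chord of $G_{L_k}$ from $p$ to its base point $b_{nc_{k+1}}$ or $b_{c_{k-1}}$ and closes up there. The hypothesis that no two chords within the same family $T_{k, j}$ cross each other lets me embed ribbons of one type disjointly in space; ribbons of distinct types $T_{k, 2}$ and $T_{k, 3}$ may be forced to cross, and these crossings are precisely the chord-intersections recorded by the pairings $\langle\marubatu{j}{i}, G_{L_k}\rangle$. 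A direct bookkeeping of how the resulting $F_k$ meets $L_i$ for $i\neq k$ should show $m_{123}(F) = 0$: every over-crossing of $L_k$ is paired by its ribbon with a cancelling companion intersection near the base point, so every $e_{jk}(w_i)$ vanishes.

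It remains to identify $t_{123}(F)$. The triple points of $F_1 \cap F_2 \cap F_3$ split into two geometric families. Family (a) consists of triple points that project to a planar intersection of two of the $C_i$ lying inside a region of the third; the orientation convention for the regions forces the spatial sign $\tau_{123}$ at each such point to coincide with $\dokh{p}{C_{1, i}}{2}{3}$ (or its cyclic analogue), and by Proposition \ref{mu-invariantprop}(3) the three cyclic contributions collapse to a single copy of $\domu{1}{2}{3}$. Family (b) consists of triple points created when the ribbon of $F_k$ issuing from a chord in $T_{k, j}$ crosses the piece of the Seifert system attached to a chord in $T_{k, i}$ within the same $G_{L_k}$; the combinatorial sign $\dokhplus{p}{S_k}{T_{k, j}}{T_{k, i}}$ in the definition of $\langle\marubatu{j}{i}, G_{L_k}\rangle$ is engineered precisely to equal $\tau_{ijk}$ at the corresponding spatial triple point. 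Summing the two families yields $t_{123}(F) = \domu{1}{2}{3} + \sum_{(i, j, k) \in \mathfrak{A}_3}\langle\marubatu{j}{i}, G_{L_k}\rangle$, which combined with $m_{123}(F) = 0$ gives \eqref{syuteirinokousiki}.

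The main obstacle will be the sign bookkeeping at the heart of the identification in Family (b): one must verify, in every local configuration of two chords (same endpoint, distinct endpoints on the same base point side, arcs running in either direction on $S_k$, each of the four possible $\pm$-decorations inherited from $\vlap{k}{j}$ and $\vlam{k}{j}$), that the planar sign $\dokhplus{p}{S_k}{T_{k, j}}{T_{k, i}}$ really reads off the right-handedness of the spatial frame $(n_i, n_j, n_k)$ at the associated triple point. Equally delicate is confirming that the ribbon attachments can be realised so as to produce no extra triple points beyond those of Families (a) and (b) and no self-triple-points, which is what ultimately forces the clean formula; the non-crossing condition on each $T_{k, j}$ and the position of the base points $b_{nc_{k+1}}, b_{c_{k-1}}$ are the combinatorial ingredients that make this achievable.
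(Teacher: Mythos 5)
Your overall strategy coincides with the paper's: both apply Theorem \ref{mimumtthm} to Seifert surfaces assembled from the disks over the smoothed components (which is what Seifert's algorithm produces here) together with tongues of surface guided by the chords of $G_{L_k}$, both arrange $m_{123}(F)\equiv 0$ by pushing the intersections of the surfaces with the other components next to the base points, and both split $t_{123}(F)$ into an ``old'' part equal to $\domu{1}{2}{3}$ and a ``new'' part equal to the sum of the chord-diagram pairings. One small correction to your $m_{123}$ step: in the paper the intersections are not cancelled in pairs but merely relocated, so that the word becomes $w_1=3^{\epsilon_1}\cdots3^{\epsilon_i}2^{\epsilon_{i+1}}\cdots2^{\epsilon_{i'}}$, with every letter $3$ preceding every letter $2$; it is this ordering (guaranteed by the choice of the arc from $b_{nc_2}$ to $b_{c_3}$ free of points of $V_a(L_1)$) that forces $e_{23}(w_1)=0$.

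There is, however, a concrete error in your accounting of Family (a). You take Family (a) to be all triple points projecting to an intersection of two of the $C_i$ inside a region of the third, and then invoke Proposition \ref{mu-invariantprop}(3) to ``collapse the three cyclic contributions to a single copy of $\domu{1}{2}{3}$.'' But cyclic permutations are even, so Proposition \ref{mu-invariantprop}(3) gives $\domu{2}{3}{1}=\domu{3}{1}{2}=\domu{1}{2}{3}$; if all three cyclic types of triple point occurred with the signs you assign, Family (a) would contribute $3\domu{1}{2}{3}$, not $\domu{1}{2}{3}$. What saves the count in the paper is a geometric feature absent from your construction: the disks of the three surfaces are pushed to three distinct heights ($D_3$ above $D_2$ above $D_1$), so the vertical intersection arc created over a crossing of $C_i$ and $C_j$ only reaches up to the lower of the two disk heights, and consequently only the crossings of $C_2\cap C_3$ lying over a disk of $D_1$ produce triple points; these contribute exactly $\domu{1}{2}{3}$. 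As written, your $F_k^0$ all lie in the plane, so the triple intersections are not even transverse; once you perturb by choosing heights you must check that exactly one cyclic flavor survives. With that correction, and with the local sign verifications you already flag for Family (b) (which the paper carries out by inspection of the configurations near the crossings), your argument becomes the paper's proof.
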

\begin{ex}
  Let us calculate the triple linking number $\mimu{L}{123}$ for the link $L$ in Figure \ref{figure_gutairei_LDLLDL.tex}. 
  By computing each linking number, we obtain
  \begin{equation}
    \LN{1}{2}=2,\quad \LN{2}{3}=-2,\quad \LN{3}{1}=2,\quad \Delta_L(123)=2. 
  \end{equation}
  We also have $\domu{1}{2}{3}=2$ (see Figure \ref{figure_gutairei_LDLLDL.tex}). 
  Next, since $G_{L_1}$, $G_{L_2}$, and $G_{L_3}$ are as shown in Figure \ref{figure_GL_123.tex}, we have $\left\langle\marubatu{3}{2}, G_{L_1}\right\rangle=-3$, $\left\langle\marubatu{1}{3}, G_{L_2}\right\rangle=-1$, and $\left\langle\marubatu{2}{1}, G_{L_3}\right\rangle=-1$. 
  \begin{figure}[htbp]
      \includegraphics[width=150mm]{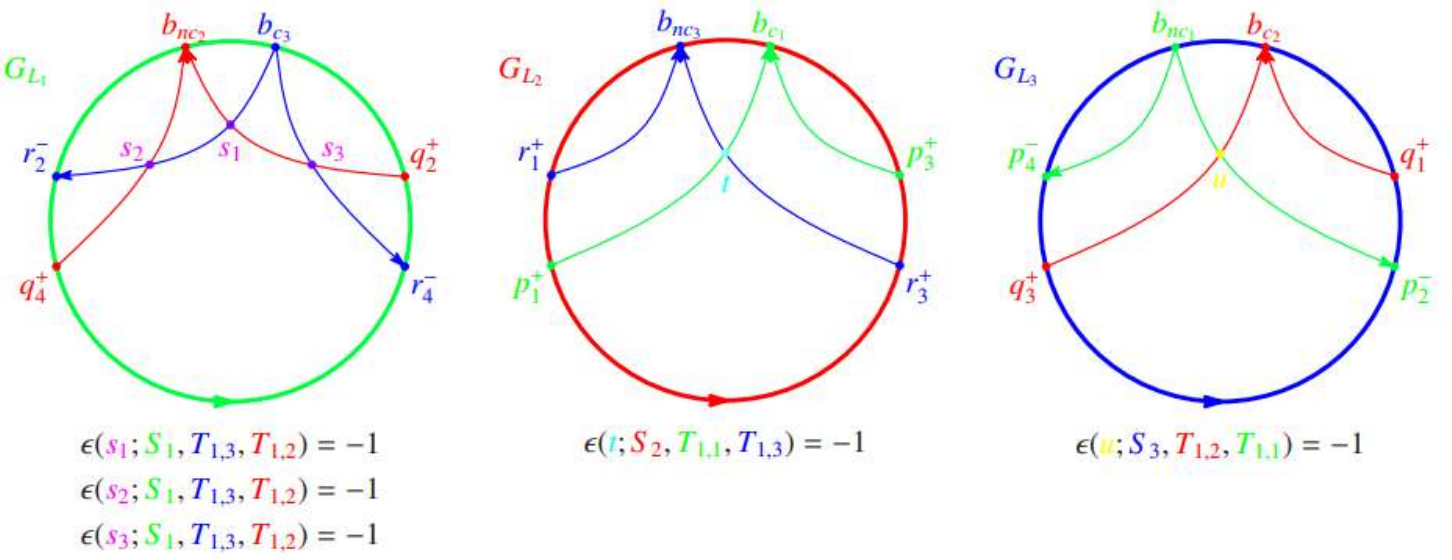}
      \caption{Generalized chord diagrams $G_{L_1}$, $G_{L_2}$, and $G_{L_3}$}
      \label{figure_GL_123.tex}
  \end{figure}
  From the above, we obtain: 
  \begin{equation}
      \mimu{L}{123}\equiv2-(-3-1-1)\equiv1\mod 2. 
  \end{equation}
\end{ex}
  To prove Theorem \ref{syuteiri}, we need the following theorem. 
\begin{thm}[cf.\ {\cite[Theorem3]{Introducing doodles}}]\label{IDthm3}
  Let $D=(C_1, C_2, C_3)$ be a three-component doodle.
  Then we have the following equation: 
  \begin{equation}
    \mimu{L(D)}{123}=-\domu{1}{2}{3}. 
  \end{equation}
\end{thm}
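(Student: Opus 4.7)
The plan is to apply Theorem \ref{mimumtthm} to $L(D)$. Because every pair of components of $L(D)$ is unlinked, $\LN{i}{j} = 0$ for $i \neq j$ and hence $\Delta_{L(D)}(123) = \gcd\{0,0,0\} = 0$, so the congruence of Theorem \ref{mimumtthm} becomes an honest equality $\mimu{L(D)}{123} = m_{123}(F) - t_{123}(F)$ for any choice of Seifert surfaces $F = F_1 \cup F_2 \cup F_3$. It therefore suffices to exhibit $F$ with $t_{123}(F) = 0$ and $m_{123}(F) = -\domu{1}{2}{3}$.

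For each $i$, I would construct $F_i$ by applying Seifert's algorithm at the self-crossings of the component $L_i$ of $L(D)$; the resulting Seifert circles coincide with the simple closed curves $C_{i, 1}, \dots, C_{i, a_i}$ of Definition \ref{mu-invariantdef}, and $F_i$ consists of the planar disks they bound, joined by small twisted bands attached near each self-crossing of $C_i$. After pushing each $F_i$ into a thin tubular neighborhood of $C_i$ in $\R^3$ and perturbing generically, the three surfaces are mutually transverse. Because the doodle $D$ has only transverse double points and no triple points, no three of $C_1, C_2, C_3$ meet simultaneously in the plane, so after the perturbation $F_1 \cap F_2 \cap F_3 = \emptyset$, giving $t_{123}(F) = 0$.

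It remains to prove $m_{123}(F) = -\domu{1}{2}{3}$. With the surfaces above and the cyclic over/under convention of $L(D)$, every intersection of $L_k$ with $F_j$ occurs at a planar crossing of $C_k$ and $C_j$, and its sign in the word $w_k$ is determined by local orientations through Definition \ref{triplepointsign}. Each $w_k$ is thereby an explicit cyclic word whose letters are indexed by the crossings of $C_k$ with the two other curves; the sign $\epsilon$ of each letter is further modulated by the orientation parity of the Seifert disk of $F_j$ (or $F_i$) that $L_k$ pierces, which is precisely the parity $(-1)^{n_{C_{j, \ell}}}$ appearing in Definition \ref{mu-invariantdef}.

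The main obstacle is the combinatorial bookkeeping required to match the triple sum $e_{23}(w_1) + e_{31}(w_2) + e_{12}(w_3)$, which is organized by the three components of $L(D)$, with $\domu{1}{2}{3}$, which Definition \ref{mu-invariantdef} organizes around the smoothed pieces of $C_1$ alone. I would reconcile the two by rewriting each $e_{ij}(w_k)$ as a sum over $p \in C_i \cap C_j$ of $\nikh{p}{i}{j}$ weighted by an integer counting how the remaining curve $C_k$ winds about $p$, and then folding the three cyclic contributions into a single expression equal to $-\domu{1}{2}{3}$ via the cyclic symmetry (\ref{mucomponentirekae}) of Proposition \ref{mu-invariantprop}. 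The specialization to simple closed curves with cyclic height should recover the original theorem of Fenn--Taylor from \cite{Introducing doodles} as a consistency check.
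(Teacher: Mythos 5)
Your opening move --- applying Theorem \ref{mimumtthm} to $L(D)$ and using $\Delta_{L(D)}(123)=\gcd\{0,0,0\}=0$ to turn the congruence into an equality --- is exactly how the paper starts. But from there you distribute the content in the opposite way from the paper, and the step you lean on hardest does not hold up. The paper builds the surfaces so that $m_{123}=0$ and all of $\mu(C_1,C_2,C_3)$ is carried by the triple-point count: it stacks the Seifert disks $D_1, D_2, D_3$ at increasing heights so that $L_1$ meets only $\Sigma^2$ (hence $w_1$ is a word in the single letter $2$ and $e_{23}(w_1)=0$, and cyclically), and then identifies each triple point of $D_1\cup D_2\cup D_3$ with a point of $C_2\cap C_3$ lying inside a Seifert disk of $C_1$, with matching sign $\epsilon(p;C_{1,i},C_2,C_3)$. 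You instead want $t_{123}(F)=0$ and $m_{123}(F)=-\mu(C_1,C_2,C_3)$.

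The genuine gap is your argument that $t_{123}(F)=0$. Triple points of $F_1\cap F_2\cap F_3$ are not located at triple points of the curves $C_1,C_2,C_3$; they occur wherever the \emph{interiors} of the three surfaces pass through a common point, and the Seifert disks of each $F_i$ sweep out whole planar regions, not just a neighborhood of $C_i$. (Indeed, a Seifert surface for $L_i$ cannot be isotoped into a tubular neighborhood of $L_i$ at all, since $L_i$ is not null-homologous there.) In the paper's own picture the triple points sit directly above \emph{double} points of the doodle --- points of $C_2\cap C_3$ inside a disk of $F_1$ --- so the fact that $D$ has no triple points among its curves is irrelevant, and for the Borromean-type examples the signed triple-point count of these disk-with-band surfaces is genuinely nonzero. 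So your construction does not give $t_{123}(F)=0$, and the theorem's content would then have to be recovered from the $m_{123}$ computation, which you leave as ``combinatorial bookkeeping'': rewriting the pair-count $e_{23}(w_1)$ (a sum over ordered pairs of letters of $w_1$) as a single sum over $p\in C_2\cap C_3$ weighted by winding data is a nontrivial cancellation argument that is not carried out. Either supply a construction that provably kills the triple points together with a complete evaluation of $m_{123}$, or follow the paper's route and put the invariant into $t_{123}$, where the sign-by-sign correspondence with $\epsilon(p;C_{1,i},C_2,C_3)$ can be checked locally.
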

\begin{proof}[\emph{Proof}]
  We will consider three generalized chord diagrams $G_{L_k}\, (k=1, 2, 3)$, and based on these chord diagrams, we will consider a Seifert surface to link $L(D)=(L_1, L_2, L_3)$ for which $m_{123}(F)=0$. 
  
  First, we construct a connected Seifert surface for $L_2$ using the Seifert algorithm. 
  The disks to be attached should be like as shown in Figure \ref{figure_attach_disk.tex}. 
  \begin{figure}[htbp]
    \centering
    \includegraphics[width=110mm]{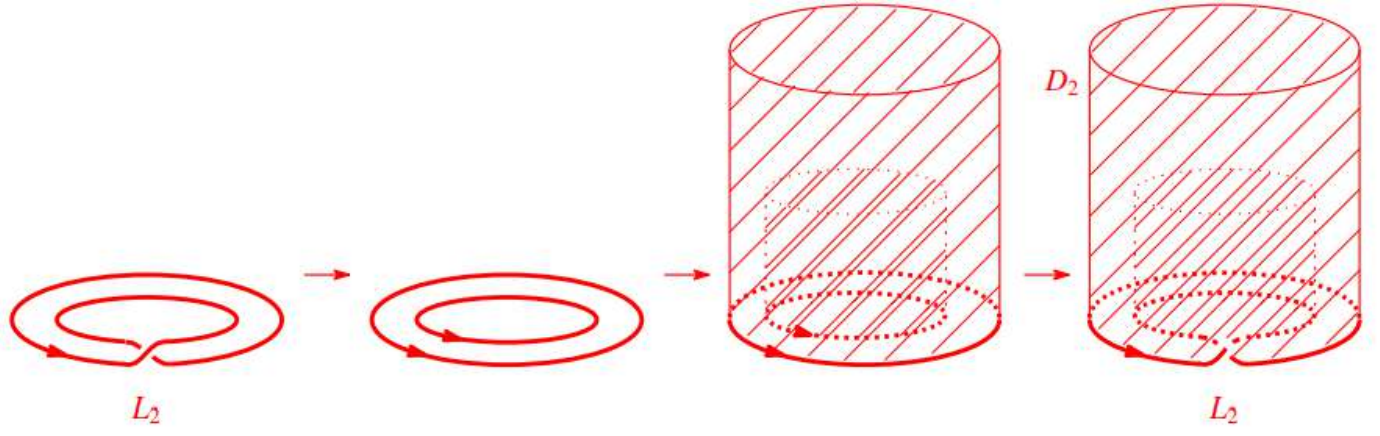}
    \caption{Attaching disks to obtain Seifert surface}
    \label{figure_attach_disk.tex}
  \end{figure}  
  Also, assume the attached disks are oriented according to the outward-normal-first convention. 
  Let $D_2$ be the surface constructed in this way. 
  
  Next, we ``stretch this surface $D_2$ as indicated by $G_{L_1}$''. 
  Stretching refers to the deformation shown in Figure \ref{figure_stretch_surface_D2.TEX}. 
  \begin{figure}[htbp]
    \centering
    \includegraphics[width=130mm]{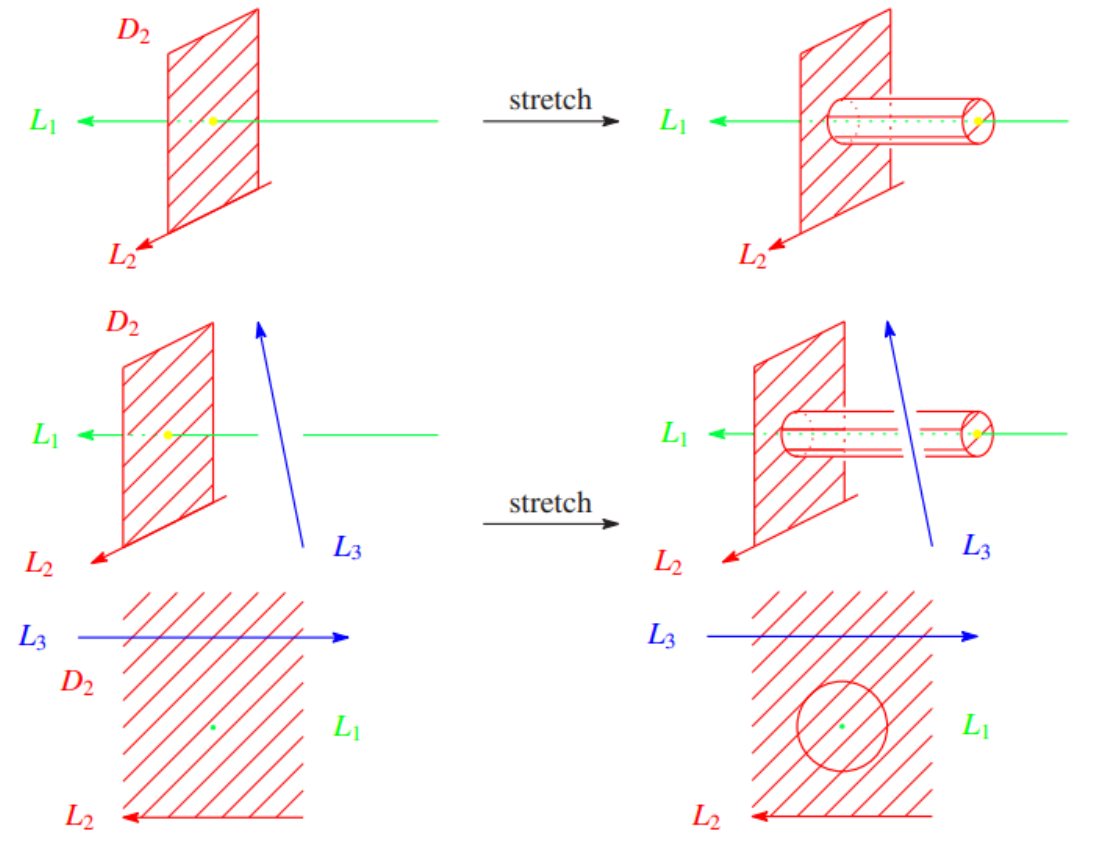}
    \caption{Stretching a Seifert surface}
    \label{figure_stretch_surface_D2.TEX}
  \end{figure}
  Stretch the surface from a neighborhood of the intersection of $D_2$ and $L_1$ in the opposite direction of $L_1$, until we arrive at $b_{nc_2}$. 
  At this point, the thickness and stretch destination should be slightly displaced to avoid self-intersection (see Figure \ref{figure_stretch_surface_bnc_2.TEX}). 
\begin{figure}[htbp]
  \centering
  \includegraphics[width=60mm]{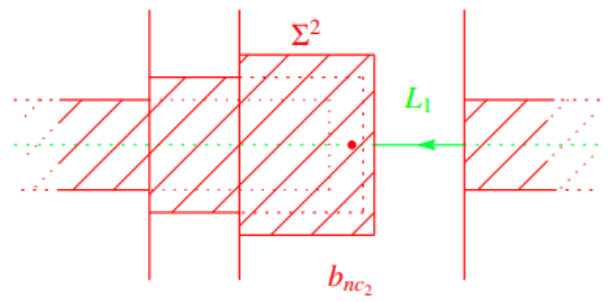}
  \caption{Configuration near $b_{nc_2}$}
  \label{figure_stretch_surface_bnc_2.TEX}
\end{figure}
  The resulting surface is denoted by $\Sigma^2$. 
  By constructing $D_1$ and $D_3$ and stretching them along $L_3$ and $L_2$, respectively, we obtain the Seifert surfaces $\Sigma^1$ and $\Sigma^3$ for $L_1$ and $L_3$. 
  However, when pushing up the disk attached to each $S^1$ in the process of the Seifert algorithm, $\Sigma^2$ is pushed higher than $\Sigma^1$, and $\Sigma^3$ is pushed higher than $\Sigma^2$ (see Figure \ref{figure_sigma123notakasawoarawasu.TEX}). 
  \begin{figure}[htbp]
    \centering
    \includegraphics[width=130mm]{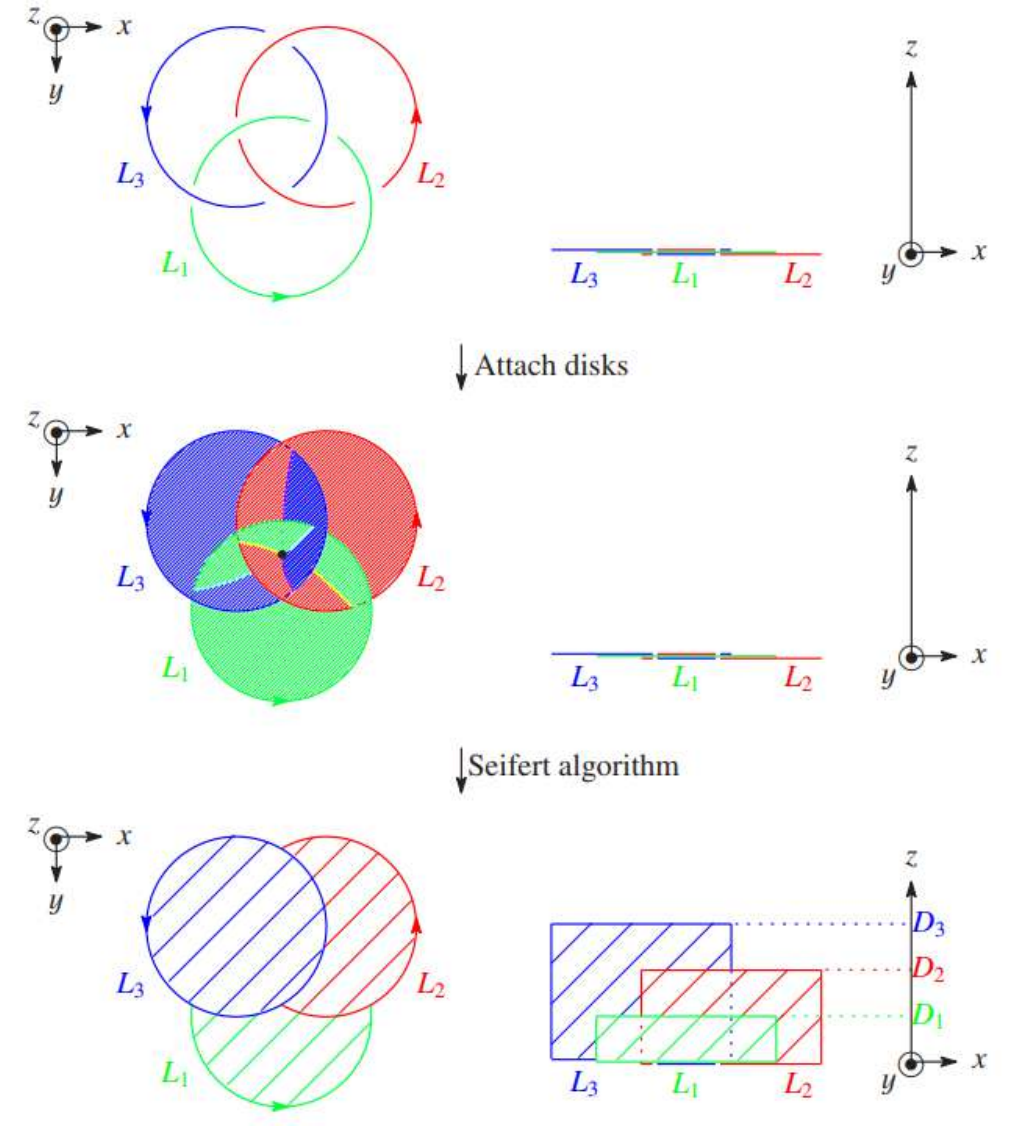}
    \caption{Relative heights of the Seifert surfaces}
    \label{figure_sigma123notakasawoarawasu.TEX}
  \end{figure}
  We will compute $m_{123}(\Sigma^1\cup\Sigma^2\cup\Sigma^3)$. 
  From the construction of $L(D)$, $L_1$ is below $L_3$, so $L_1$ does not intersect with $\Sigma^3$. 
  Thus, the word $w_1$ of the intersection points of $\Sigma^2$ with $L_1$ is 
  \begin{equation}
    w_1=2^\epsilon_1\cdots2^\epsilon_n \quad(\epsilon_i\in\{\pm1\}) 
  \end{equation}
  and hence $e_{23}(w_1)=0$. 
  Similarly, we find that $e_{31}(w_2)=0$ and $e_{12}(w_3)=0$, so 
  \begin{equation}
    m_{123}(\Sigma^1\cup\Sigma^2\cup\Sigma^3)=0. 
  \end{equation}
  We now compute $t_{123}(\Sigma^1\cup\Sigma^2\cup\Sigma^3)$. 
  The stretching from $D_i$ to $\Sigma^i$ do not change the set of triple points. 
  Therefore, 
  \begin{equation}
    t_{123}(\Sigma^1\cup\Sigma^2\cup\Sigma^3)=t_{123}(D_1\cup D_2\cup D_3). 
  \end{equation}
  Since $\Delta_{L(D)}(123)=0$, we obtain the following from \eqref{Milnorkousiki}: 
  \begin{equation}\label{eqtriplelinkingnumberto3zyuutensuu}
    \mimu{L(D)}{123}=-t_{123}(D_1\cup D_2\cup D_3). 
  \end{equation}
  From the construction of $D_1\cup D_2\cup D_3$, the triple points appear on the upper face of $D_1$ (see Figure \ref{figure_mainproof_t123mu123.tex}). 
  \begin{figure}[htbp]
    \centering
    \includegraphics[width=180mm]{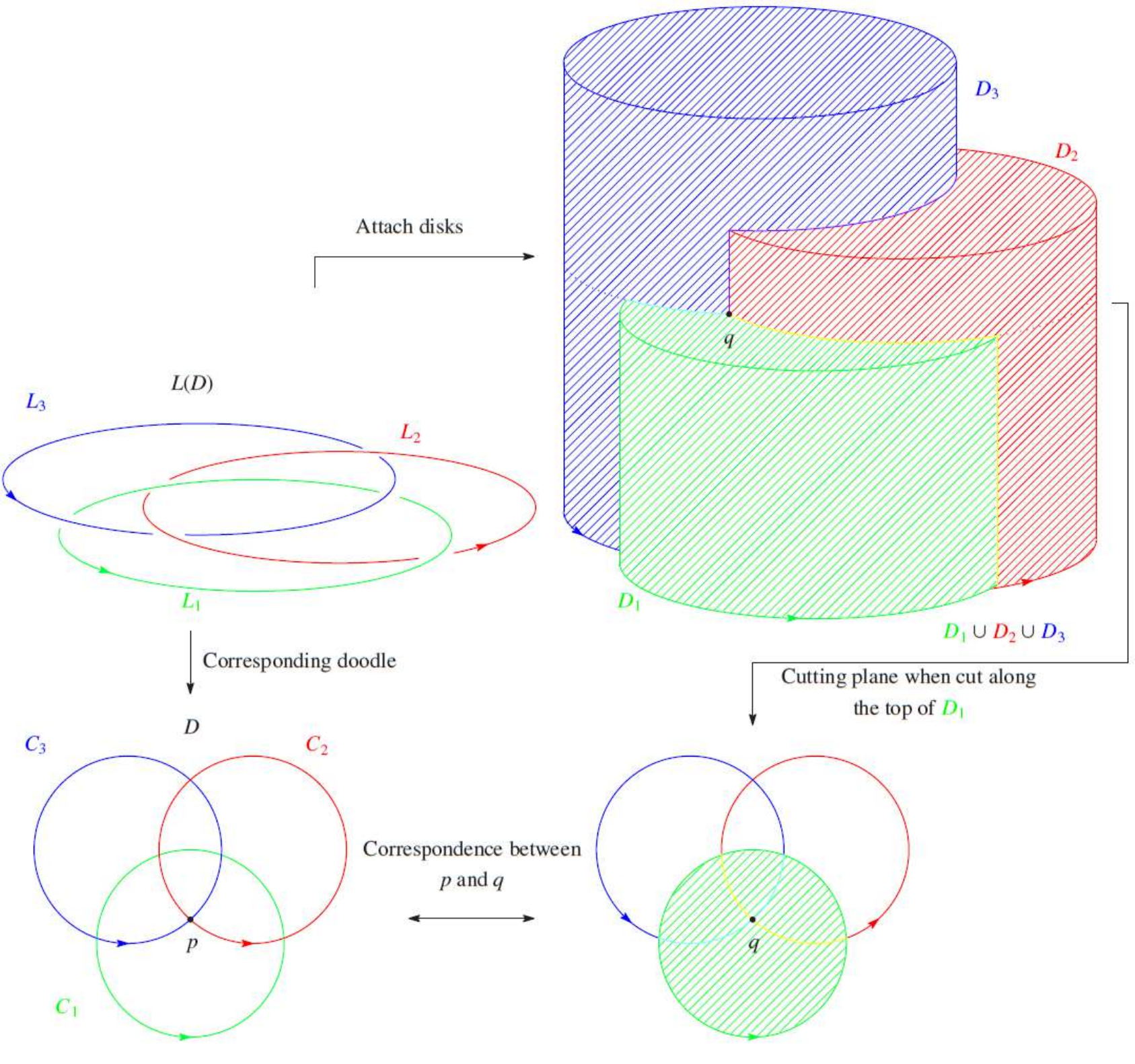}
    \caption{Correspondence between doodle intersections and triple points}
    \label{figure_mainproof_t123mu123.tex}
  \end{figure}
  The vertical line of $D_2\cap D_3$ corresponds to $C_2\cap C_3$, and the intersection of the vertical line and the upper face of $D_1$ is the triple point $q$, so $p$ and $q$ correspond. 
  We also have $\tau_{123}(q)=\dokh{p}{C_1}{2}{3}$ by inspection (see Figure \ref{figure_dokhtautaiou.TEX}). 
  \begin{figure}[htbp]
    \centering
    \includegraphics[width=150mm]{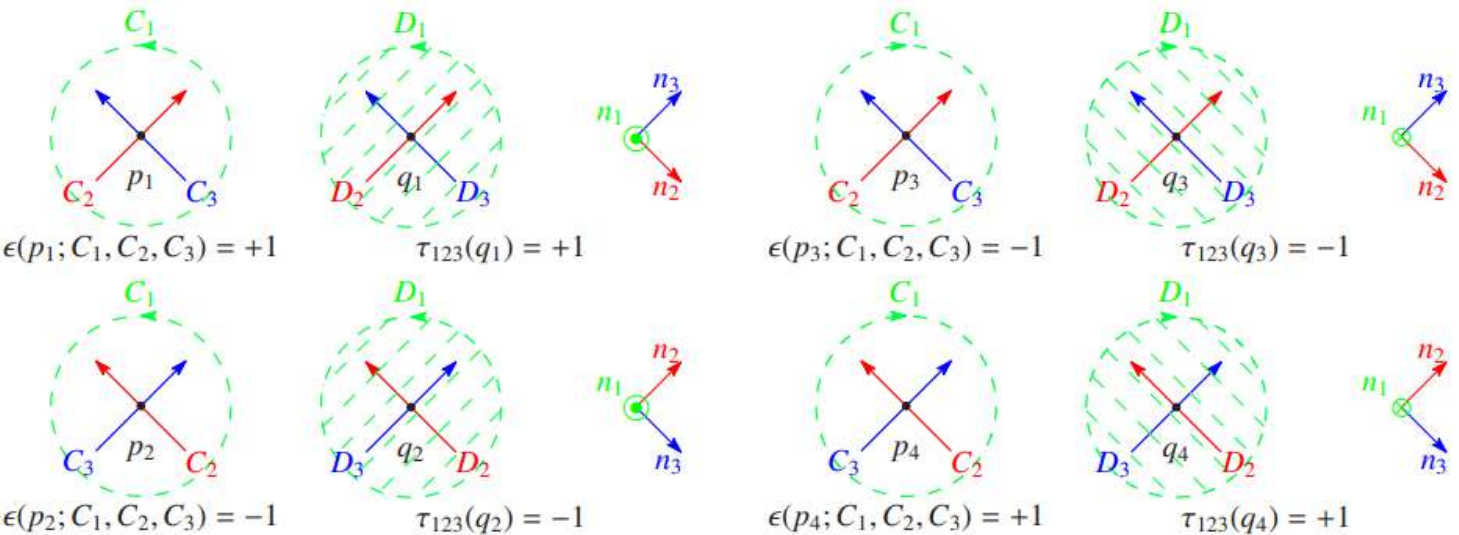}
    \caption{Correspondence between the signs of doodle intersections and triple points}
    \label{figure_dokhtautaiou.TEX}
  \end{figure}
  Therefore, 
  \begin{equation}\label{eq3zyuutensuutodoinv}
    t_{123}(D_1\cup D_2\cup D_3)=\domu{1}{2}{3}
  \end{equation}
  \eqref{eqtriplelinkingnumberto3zyuutensuu} and \eqref{eq3zyuutensuutodoinv} complete the proof. 
\end{proof}

  \subsection{Proof of main theorem}
\begin{proof}[\emph{Proof of Theorem \ref{syuteiri}}]
  First, we construct three generalized chord diagrams $G_{L_k} (k=1, 2, 3)$ corresponding to $L_k$. 
  We will attach the surface $D_k (k=1, 2, 3)$ to each $L_k$ in the same manner as in proof of Theorem \ref{IDthm3}. 
  We start by deforming $D_2$. 
  First, stretch the surface from the intersections $L_1\cap D_2$ corresponding to the elements of $\vla{1}{2}$, in the negative direction of $L_1$, until near $b_{nc_2}$.
  Then, stretch the surface from the intersections $L_3\cap D_2$ corresponding to the elements of $\vla{3}{2}$, in the positive direction of $L_3$, until near $b_{c_2}$. 
  The resulting surface is denoted by $\Sigma^2$. 
  Similarly, we perform the same transformations on $D_1$ and $D_3$, yielding the Seifert surfaces $\Sigma^1$ and $\Sigma^3$ for $L_1$ and $L_3$, respectively. 
  However, the parts of the surfaces that are stretched against the direction of each component must be thinner than the parts stretched along the direction, to avoid any intersection between the stretched parts (see Figure \ref{figure_stretch_surface_bbcc23.TEX}). 
  \begin{figure}[htbp]
    \centering
    \includegraphics[width=100mm]{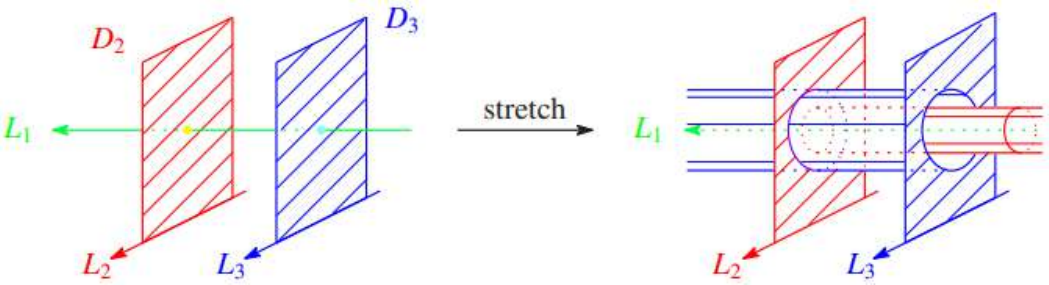}
    \caption{Moving the intersection of a surface and the link by stretching}
    \label{figure_stretch_surface_bbcc23.TEX}
  \end{figure}
  Now, we compute $m_{123}(\Sigma^1\cup\Sigma^2\cup\Sigma^3)$. 
  The word $w_1$ of the intersection points of $\Sigma^2$ with $L_1$ and the intersection of $\Sigma^3$ with $L_1$, by the choice of $b_{nc_2}$ and $b_{c_3}$, is given by
  \begin{equation}
    w_1=3^{\epsilon_1}\cdots3^{\epsilon_i}2^{\epsilon_{i+1}}\cdots2^{\epsilon_{i^\prime}}, \quad\epsilon_j\in\{\pm1\}. 
  \end{equation}
  In this case, $e_{23}(w_1)=0$. 
  Similarly, $e_{31}(w_2)=0$ and $e_{12}(w_3)=0$, depending on the choice of appropriate base points. 
  Therefore, 
  \begin{equation}\label{m123Sigmaprime}
    m_{123}(\Sigma^1\cup\Sigma^2\cup\Sigma^3)\equiv0\mod\Delta_L(123). 
  \end{equation}
  Next, we compute $t_{123}(\Sigma^1\cup\Sigma^2\cup\Sigma^3)$. 
  There are two types of triple points on the surface: those that already existed in $D_1\cup D_2\cup D_3$, and those created by the stretching during the construction of $\Sigma^1\cup\Sigma^2\cup\Sigma^3$. 
  The triple points that were present in $D_1\cup D_2\cup D_3$ remain unchanged in $\Sigma^1\cup\Sigma^2\cup\Sigma^3$. 
  By the proof of Theorem \ref{IDthm3}, we have
  \begin{equation}\label{t123D1D2D3}
    t_{123}(D_1\cup D_2\cup D_3)=\domu{1}{2}{3}. 
  \end{equation}
  Let $\alpha_k\, (k=1, 2, 3)$ denote the sum of the signs of the newly created triple points during the stretching process along $L_k$. 
  Then, we have
  \begin{equation}\label{t123Sigmaprime}
    t_{123}(\Sigma^1\cup\Sigma^2\cup\Sigma^3)=\domu{1}{2}{3}+\sum_{k=1}^3\alpha_k. 
  \end{equation}
  We will now show that
  \begin{equation}\label{alphaequolmarubatu}
    \alpha_3=\left\langle\marubatu{3}{2}, G_{L_1}\right\rangle. 
  \end{equation}

  If we write $\eL{k}{a}{b}$, it represents an arc cut along the direction of $L_k$, with the starting point at $a$ and the endpoint at $b$. 
  First, we observe where the triple points arise in the stretching process (see Figure \ref{figure_Thmproof1.tex}). 
  \begin{figure}[htbp]
    \centering
    \includegraphics[width=180mm]{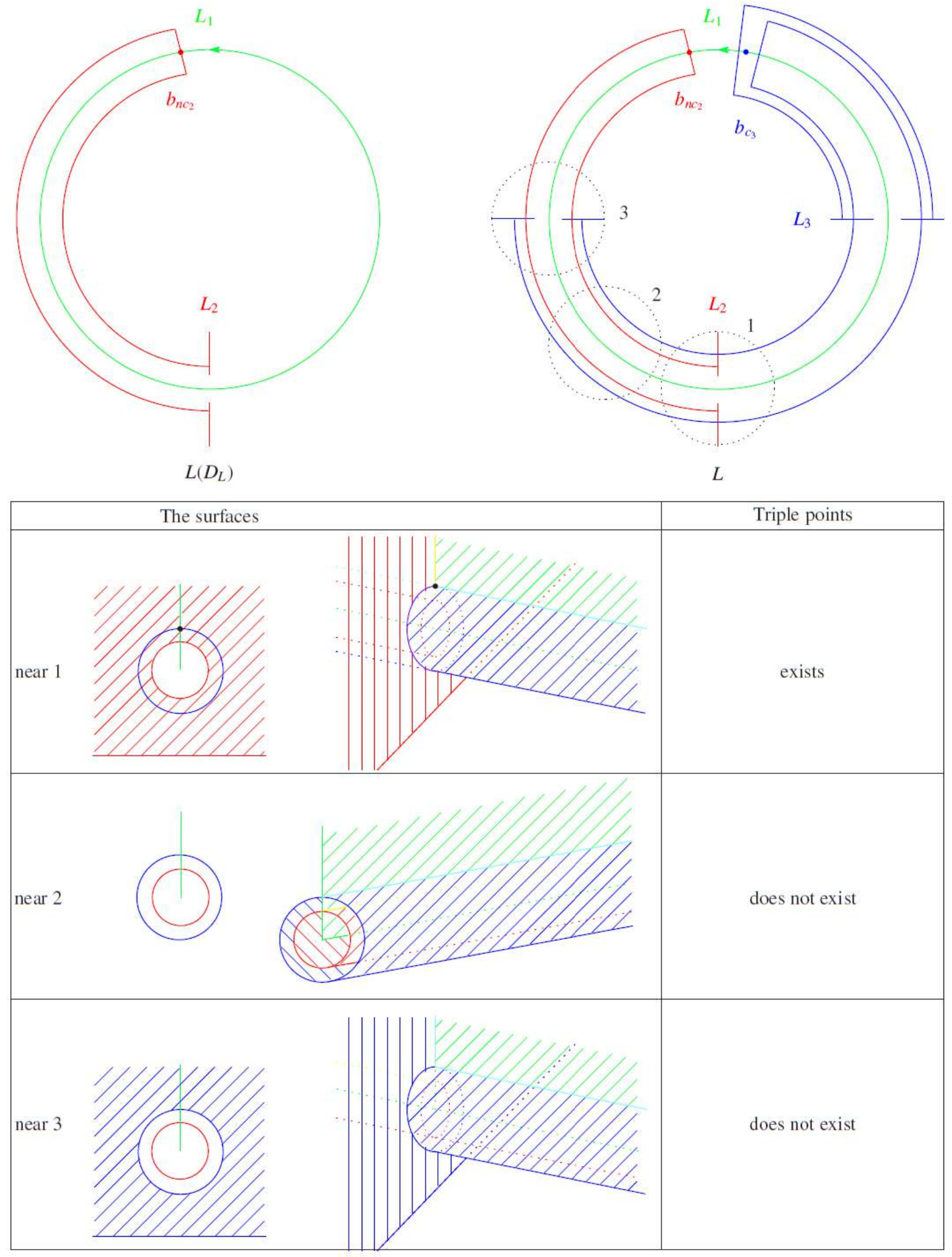}
    \caption{Illustration of where triple points occur}
    \label{figure_Thmproof1.tex}
  \end{figure}
  From Figure \ref{figure_Thmproof1.tex}, we can see that triple points are created in the neighborhoods of the crossings in $\vla{1}{2}$, where the stretched part of $\Sigma^3$ and the surface $\Sigma^2$ collide (near 1 in Figure \ref{figure_Thmproof1.tex}). 
  This is in one-to-one correspondence with the elements of $\vla{1}{3}$ on the arc $\eL{1}{b_{nc_2}}{p}$ (see Figure \ref{figure_Thmproof23.tex}). 
  \begin{figure}[htbp]
    \centering
    \includegraphics[width=150mm]{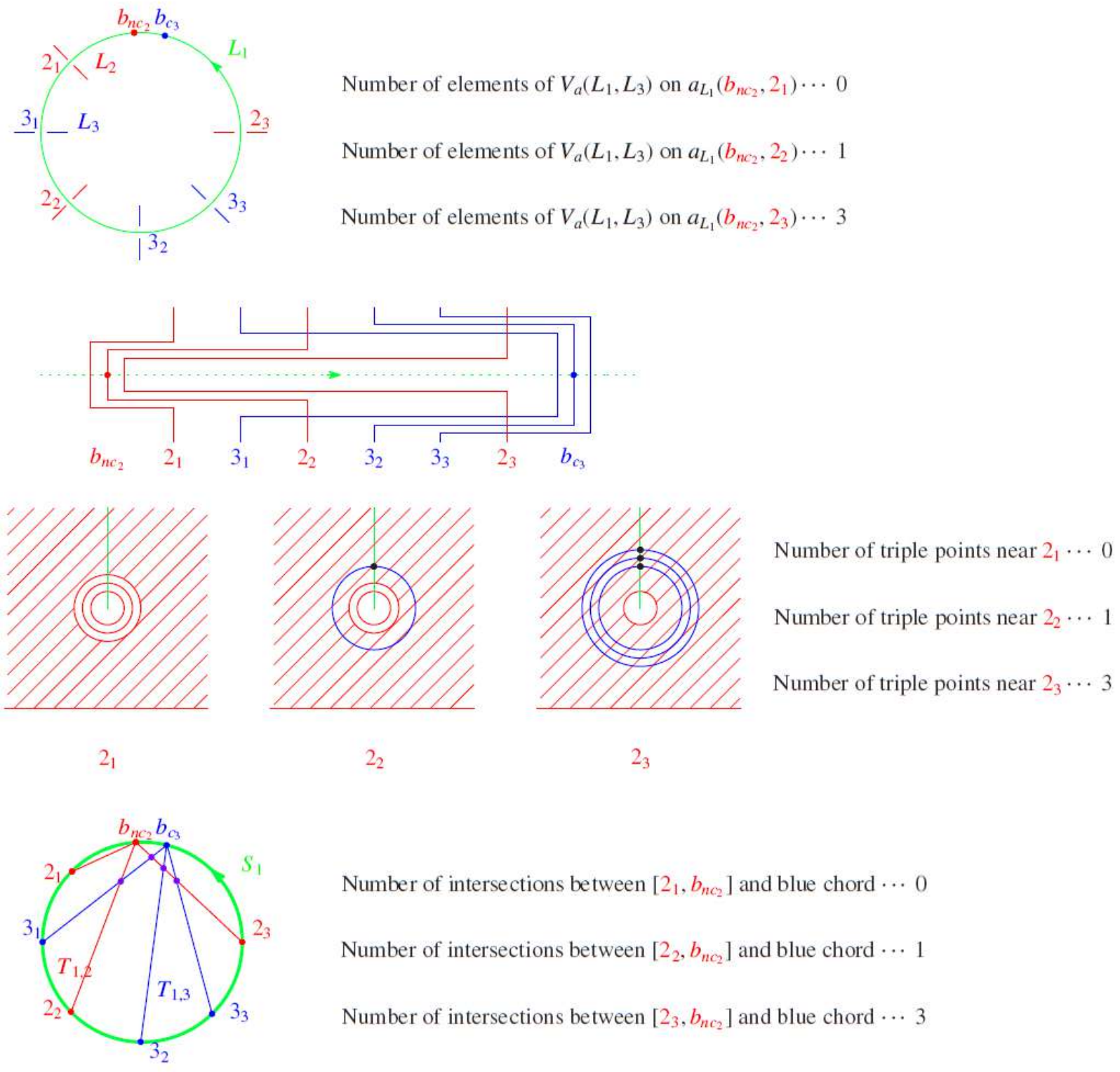}
    \caption{Correspondence between triple points and intersections of chord diagrams}
    \label{figure_Thmproof23.tex}
  \end{figure}
  These triple points are also in one-to-one correspondence with the intersections of the chord connecting the points of $\vla{1}{2}$ and $b_{nc_2}$ in $G_{L_1}$ (see Figure \ref{figure_Thmproof23.tex}). 
  We can verify that the sign of such a triple point is equal to that of the corresponding intersection of the chord (see Figure \ref{figure_Thmproof41.tex}). 
  \begin{figure}[htbp]
    \centering
    \includegraphics[width=150mm]{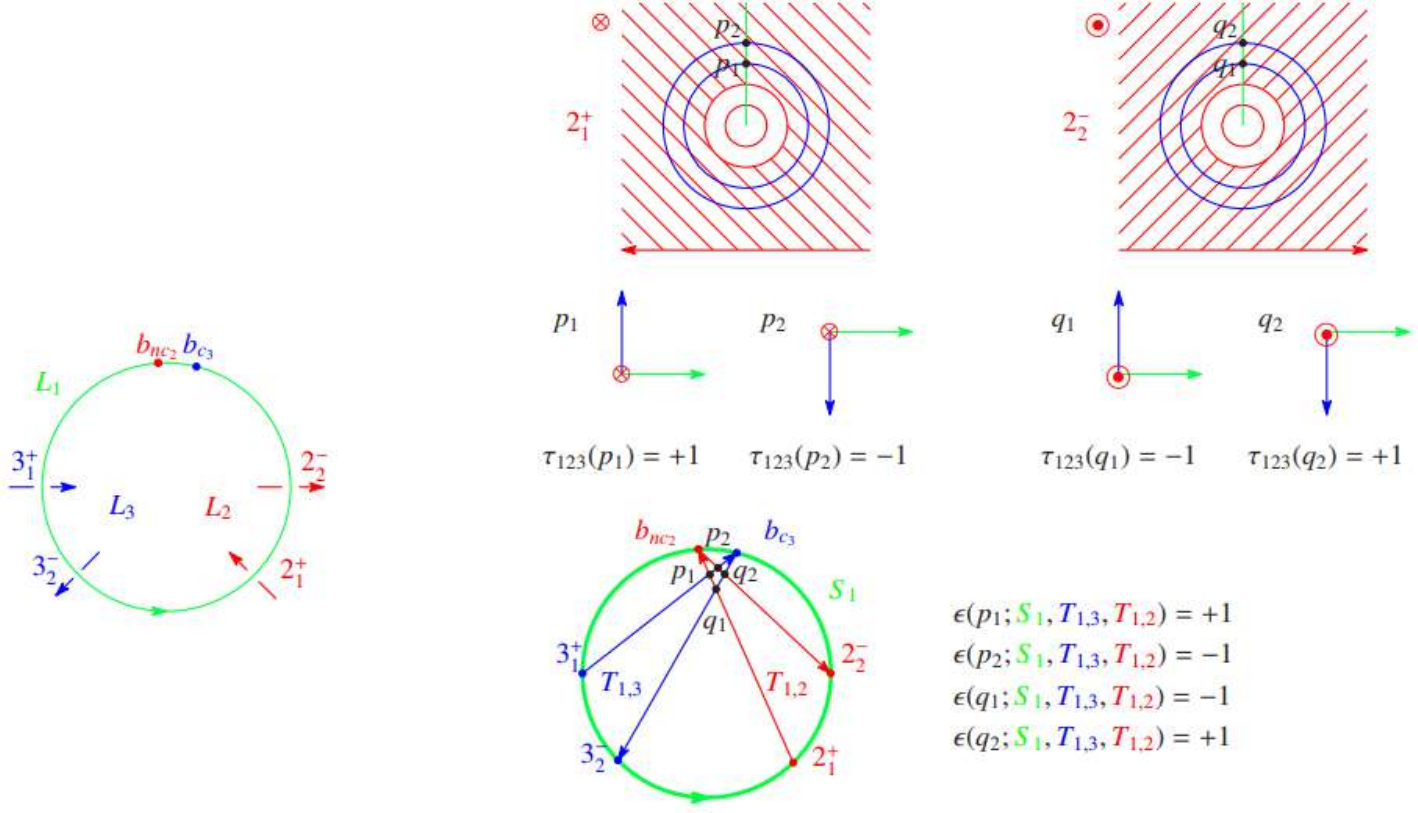}
    \caption{Correspondence between the signs of triple points and intersection signs in chord diagrams}
    \label{figure_Thmproof41.tex}
  \end{figure}
  For example, in Figure \ref{figure_Thmproof41.tex}, the surface stretched from $3_1^+$ collides with the surface near $2_1^+$ to form a triple point $p_1$, with the sign of $\tau_{123}(p_1)=+1$. 
  On the other hand, the chord diagram shows that the chord with the endpoint at $3_1^+$ intersects with the chord that and the endpoint at $2_1^+$, and the sign of $\dokhplus{p_1}{S_1}{T_{1, 3}}{T_{1, 2}}=+1$. 
  Thus \eqref{alphaequolmarubatu} holds. 
  The same reasoning applies to $\alpha_1=\left\langle\marubatu{1}{3}, G_{L_2}\right\rangle$ and $\alpha_2=\left\langle\marubatu{2}{1}, G_{L_3}\right\rangle$. 
  These results, along with equations \eqref{m123Sigmaprime} and \eqref{t123Sigmaprime} complete the proof. 
\end{proof}

  
\end{document}